\documentclass[11pt, a4paper, english]{amsart}
\usepackage{amsmath} 
\usepackage{amsthm} 
\usepackage{amssymb} 
\usepackage[dvipsnames]{xcolor}
\usepackage{amscd} 
\usepackage{mathtools}

\usepackage{subcaption}

\usepackage{array} 
\usepackage[cal=boondoxo,scr=euler]{mathalfa}
\usepackage[backref=page,linktocpage]{hyperref} 
\usepackage[nameinlink]{cleveref}
\usepackage{caption} %
\usepackage{graphics,graphicx} 
\usepackage{tikz,tikz-cd} 

\usetikzlibrary{graphs,graphs.standard,calc}

\usetikzlibrary{decorations.pathreplacing,}

\usepackage{enumerate} 
\usepackage{mlmodern}
\DeclareMathAlphabet{\mathsf}{OT1}{\sfdefault}{m}{n}

\newcommand{\nocontentsline}[3]{}
\newcommand{\tocless}[2]{\bgroup\let\addcontentsline=\nocontentsline#1{#2}\egroup}

\usepackage[margin=1.35in]{geometry}
\usepackage{verbatim}

\usepackage{scalerel}

\makeatletter
\def\dual#1{\expandafter\dual@aux#1\@nil}
\def\dual@aux#1/#2\@nil{\begin{tabular}{@{}c@{}}#1\\#2\end{tabular}}
\makeatother

\makeatletter
\@namedef{subjclassname@2020}{\textup{2020} Mathematics Subject Classification}
\makeatother

\DeclareMathAlphabet{\amathbb}{U}{bbold}{m}{n}

\hypersetup{
    colorlinks = true,
    linkbordercolor = {white},
    linkcolor = {BrickRed},
    anchorcolor = {black},
    citecolor = {BrickRed},
    filecolor = {cyan},
    menucolor = {BrickRed},
    runcolor = {cyan},
    urlcolor = {black}
}

\usetikzlibrary{automata}

\theoremstyle{plain}
\newtheorem{theorem}{Theorem}[section]
\newtheorem*{mastertheorem*}{Master Theorem}
\newtheorem{corollary}[theorem]{Corollary}
\newtheorem{lemma}[theorem]{Lemma}
\newtheorem{proposition}[theorem]{Proposition}

\theoremstyle{definition}
\newtheorem{definition}[theorem]{Definition}

\newtheorem{remark}[theorem]{Remark}

\crefname{theorem}{theorem}{theorems}
\Crefname{theorem}{Theorem}{Theorems}
\crefname{lemma}{lemma}{lemmas}
\Crefname{lemma}{Lemma}{Lemmas}
\crefname{proposition}{proposition}{propositions}
\Crefname{proposition}{Proposition}{Propositions}

\DeclareMathOperator{\rk}{rk}

\newcommand{\M}{\mathsf{M}}
\newcommand{\N}{\mathsf{N}}

\newcommand{\U}{\mathsf{U}}

\newcommand{\cl}{\operatorname{cl}}

\newcommand{\tr}{\operatorname{tr}}
\renewcommand{\top}{\operatorname{top}}

\title[Master Theorem for topological zeta functions]{A master theorem for\\topological zeta functions of matroids}
\author{Luis Ferroni}

\address{(L.~Ferroni)
  Universit\`a di Pisa, Pisa, Italy
}
\email{luis.ferroni@unipi.it}

\author{Lorenzo Vecchi}

\address{(L. Vecchi)
  KTH Royal Institute of Technology, Stockholm, Sweden
}

\email{lvecchi@kth.se}

\subjclass[2020]{Primary: 05B35}

\keywords{Matroids, topological zeta functions, motivic zeta functions of matroids, hyperplane arrangements}

\begin{document}

\begin{abstract}
We study the topological zeta function of a loopless matroid $\M$ and its Möbius transform. We provide a novel and manifest description (a ``Master Theorem'') for both functions and all of their coefficients, which can be used to give transparent solutions to several open questions and conjectures on topological zeta functions of matroids, even in greater generality than what was anticipated. As applications we solve conjectures of van der Veer (2019), Kutler (2023), and Mengesha, Miranda, and Sun (2026).
\end{abstract}

\maketitle

\section{Introduction}

The topological zeta function of a simple matroid was introduced by van der Veer
\cite{vanderVeer} as a combinatorial counterpart of the Denef--Loeser
topological zeta function of a hypersurface singularity. If $\M$ is realized by
a complex hyperplane arrangement, then $Z^{\top}_{\M}(s)$ agrees with the
topological zeta function of the product of the defining linear forms. In
general, however, it is defined purely in terms of the matroid: its construction
uses the lattice of flats, nested sets, and the combinatorics of wonderful
models. 
Somewhat unexpectedly, this function turns out to be independent of the choice of building set used. 
The function $Z^{\top}_{\M}(s)$ may be viewed as a rational-function invariant that records, in matroidal form, the Euler-characteristic data of the strata appearing in a log resolution. The following is the precise recursive definition of the topological zeta function of a matroid.

\begin{definition}
    There is a unique assignment that associates to every loopless matroid $\M$ two rational functions $Z_\M^{\top}(s)$ and $Y^{\top}_\M(s) \in \mathbb{Q}(s)$, characterized by the following properties:
        \begin{itemize}
            \item If $\rk \M = 0$, then $Z_\M^{\top}(s) = Y^{\top}_\M(s) = 1$.
            \item If $\rk \M = r > 0$ and $|E| = n$, then
            \[Z_\M^{\top}(s) = \frac{1}{ns + r} \sum_{\substack{F \in \mathcal{L}(\M)\\F \neq E}} Z^{\top}_{\M|_{F}}(s)\overline{\chi}_{\M/F}(1); \qquad Y^{\top}_\M(s) = \sum_{F \in \mathcal{L}(\M)} Z^{\top}_{\M|_{F}}(s) \mu_{F,E}.\]
        \end{itemize}
    Here, $\overline{\chi}$ denotes the reduced characteristic polynomial and $\mu$ denotes the Möbius function of the lattice of flats $\mathcal L (\M)$.
\end{definition}

The rational function $Y_{\M}^{\top}(s)\in \mathbb{Q}(s)$ is henceforth called the \emph{M\"obius transform} of the topological zeta function $Z_{\M}^{\top}(s)$; it was introduced and studied in detail by Mengesha, Miranda, and Sun in \cite{mengesha-miranda-sun}. Remarkably, Jensen, Kutler and Usatine \cite{jensen-kutler-usatine} show that the topological zeta function (and thus also its M\"obius transform) can be derived as a specialization of the so-called \emph{motivic zeta functions}. In the special case of hyperplane arrangements, these motivic zeta functions (and in particular the topological zeta functions) have also been studied in great detail by Kutler and Usatine in \cite{kutler-usatine}.

The main contribution of this paper is producing an explicit, iterative (i.e., non-recursive), formula for the topological zeta function and the corresponding M\"obius transform of an \emph{arbitrary} loopless matroid. 

\begin{mastertheorem*}\hypertarget{thm:main-coeffs-Z-and-Y}{}
Let $\M$ be a loopless matroid on a ground set $E$ with rank function $\rk_\M$. Then its topological zeta function can be computed as follows:
\[
    Z^{\top}_\M(s)
    =
    \sum_{m\geq 0}(-1)^m \Bigg{(}\sum_{\substack{\mathbf e\in E^m}}
    \prod_{j=1}^m \frac{1}{\rk_\M(\{e_1,\ldots,e_j\})}\Bigg{)}\,s^m,\]
where, for each $1\leq i\leq m$, $e_i$ denotes the $i$-th entry of the word $\mathbf{e}\in E^m$. The M\"obius transform is:
\[
    Y^{\top}_\M(s)
    =
    \sum_{m\geq 0}(-1)^m
    \Bigg{(}\sum_{\substack{\mathbf e\in E^m\\ \cl_\M(\mathbf e)=E}}
    \prod_{j=1}^m \frac{1}{\rk_\M(\{e_1,\ldots,e_j\})}\Bigg{)}\, s^m.
\]

\end{mastertheorem*}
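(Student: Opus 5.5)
The plan is to pass to the Möbius transform and show that both sides satisfy the same linear recursion, which determines them uniquely. Write $\widetilde Y_\M(s)$ for the word series on the right-hand side of the formula for $Y^{\top}_\M$ in the Master Theorem, and $\widetilde Z_\M(s)$ for the one for $Z^{\top}_\M$. Here $\operatorname{wt}(\mathbf e)=(-1)^m\prod_{j=1}^m \rk_\M(\{e_1,\dots,e_j\})^{-1}$ denotes the weight of a word $\mathbf e\in E^m$.

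\textbf{Step 1: the two formulas are equivalent.} Group the words $\mathbf e\in E^m$ according to the flat $F=\cl_\M(\mathbf e)$. The words with closure $F$ are exactly the words in $F^m$ spanning $F$ in $\M|_F$, and their weights are unchanged because $\rk_{\M|_F}=\rk_\M$ on subsets of $F$. Hence $\widetilde Z_\M=\sum_{F\in\mathcal L(\M)}\widetilde Y_{\M|_F}$. By Möbius inversion on $\mathcal L(\M)$, the definition of $Y^{\top}$ is equivalent to $Z^{\top}_\M=\sum_{F}Y^{\top}_{\M|_F}$. So it suffices to show $\widetilde Y_\M=Y^{\top}_\M$ for all loopless $\M$, by induction on the rank.

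\textbf{Step 2: rewrite the defining recursion in terms of $Y^{\top}$.} The reduced characteristic polynomial satisfies $\overline\chi_{\M/F}(1)=\chi_{\M/F}'(1)$, since $\chi_{\M/F}(1)=0$ for $F\neq E$. Using $\chi_{\M/F}(q)=\sum_{G\geq F}\mu_{F,G}\,q^{r-\rk G}$, this gives $\overline\chi_{\M/F}(1)=\sum_{G\ge F}\mu_{F,G}(r-\rk G)$. Substituting into the definition and exchanging sums (the term $F=E$ contributes $0$ anyway) gives
\[(ns+r)Z^{\top}_\M=\sum_{G\in\mathcal L(\M)}(r-\rk G)\,Y^{\top}_{\M|_G}.\]
Plugging in $Z^{\top}_\M=\sum_G Y^{\top}_{\M|_G}$, this becomes, for $r>0$,
\[\sum_{G\in\mathcal L(\M)}(ns+\rk G)\,Y^{\top}_{\M|_G}=0,\]
with $n=|E|$ fixed and not $|G|$. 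The $G=E$ term has coefficient $ns+r\neq 0$. Together with $Y^{\top}=1$ in rank $0$, this determines $Y^{\top}_\M$ inductively from the $Y^{\top}_{\M|_G}$ with $G\subsetneq E$. I expect this manipulation, getting the recursion into this clean form, to be the main obstacle; the rest is bookkeeping.

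\textbf{Step 3: the word series satisfies the same recursion.} By Step 1,
\[\sum_G(ns+\rk G)\widetilde Y_{\M|_G}=\sum_{m}\sum_{\mathbf e\in E^m}(ns+\rk\mathbf e)\operatorname{wt}(\mathbf e)\,s^m.\]
For $m\ge1$ we have $\rk(\mathbf e)\operatorname{wt}(\mathbf e)=-\operatorname{wt}(e_1\cdots e_{m-1})$. Summing over the last letter therefore gives $\sum_{\mathbf e\in E^m}\rk(\mathbf e)\operatorname{wt}(\mathbf e)=-n\sum_{\mathbf e'\in E^{m-1}}\operatorname{wt}(\mathbf e')$. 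This exactly cancels the coefficient of $s^m$ coming from the $ns$ term. For $m=0$ the empty word has rank $0$, since looplessness makes $\emptyset$ a flat. So the identity holds.

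The rank-$0$ base case is immediate: $E=\emptyset$, only the empty word contributes, and $\widetilde Y_\M=1$. Induction on rank then gives $\widetilde Y_\M=Y^{\top}_\M$, and Step 1 yields $\widetilde Z_\M=Z^{\top}_\M$.
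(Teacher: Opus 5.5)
Your proof is correct, but it is organized differently from the paper's.

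\textbf{The paper's route.} It proves the $Z^{\top}$ formula first, by double induction on $m$ and on the rank. It plugs the word formula for each $Z^{\top}_{\M|F}$ into the defining recursion and regroups by words. It then collapses $\sum_{F_m(\mathbf e)\le F<E}\overline{\chi}_{\M/F}(1)=r-\rk_\M(\mathbf e)$ using the cited Lemma~\ref{lem:sum_reduced_chi}, applied to the contraction by $F_m(\mathbf e)$. The induction closes with the same last-letter cancellation you use. The $Y^{\top}$ formula is then a one-line M\"obius inversion.

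\textbf{Your route.} You reverse the order. You first eliminate $Z^{\top}$ and the characteristic polynomials, turning the definition into the closed recursion $\sum_{G\in\mathcal L(\M)}(|E|s+\rk G)\,Y^{\top}_{\M|_G}=0$. You then verify that the word series satisfies it.

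\textbf{What your route buys.} Your identity $\overline{\chi}_{\M/F}(1)=\chi'_{\M/F}(1)=\sum_{G\geq F}\mu_{F,G}(r-\rk G)$ makes the argument self-contained; summing it over $F$ is precisely Lemma~\ref{lem:sum_reduced_chi}. The induction is on rank alone, with no induction on $m$, because the leading coefficient $|E|s+r$ is a unit in $\mathbb{Q}[[s]]$. As a by-product you obtain a clean recursion for the M\"obius transform that the paper does not state.

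\textbf{What the paper's route buys.} It works directly with the defining recursion for $Z^{\top}$ and makes the $Y^{\top}$ statement an immediate corollary.

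Both proofs tacitly identify rational functions with their Taylor expansions at $s=0$. This is harmless, since every denominator that arises has the form $|F|s+\rk F$ with $\rk F>0$ and so is nonzero at $s=0$.
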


\medskip

It was noted in the work of Jensen, Kutler, and Usatine \cite[Theorem~1.11]{jensen-kutler-usatine} that the first coefficients of the Taylor expansion of $Z^{\top}_{\M}(s)$ at the
origin are already quite rigid. They established the following formulas:
\[
    Z^{\top}_{\M}(0)=1,
    \qquad
    \left. \frac{\partial}{\partial s}Z^{\top}_{\M}(s)\right|_{s=0}=-n.
\] 

These two identities, first conjectured by van der Veer in \cite[Conjecture~1~and~2]{vanderVeer} now become straightforward to deduce from our Master Theorem. Moreover, the following result solves a further conjecture by van der Veer, which predicted a closed formula for the quadratic coefficient. 

\begin{theorem}[{\cite[Conjecture~3]{vanderVeer}}]\label{thm:coeff_s2_Z}
    If $\M$ is a simple matroid on $n$ elements, then
    \[
    [s^2]Z^{\top}_{\M}(s) = \binom{n+1}{2}.
    \]
\end{theorem}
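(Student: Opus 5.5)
Taking $m=2$ in the Master Theorem, the quadratic coefficient is
\[
[s^2]Z^{\top}_\M(s) \;=\; \sum_{(e_1,e_2)\in E^2} \frac{1}{\rk_\M(\{e_1\})\,\rk_\M(\{e_1,e_2\})}.
\]
So the plan is simply to evaluate this finite sum using the hypothesis that $\M$ is simple. No recursion or induction on $\M$ should be needed.

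First, $\M$ is loopless, so $\rk_\M(\{e_1\})=1$ for every $e_1\in E$, and the first factor of each term equals $1$.

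Next, split the pairs according to whether $e_1=e_2$:
\begin{itemize}
\item If $e_1=e_2$, then $\rk_\M(\{e_1,e_2\})=\rk_\M(\{e_1\})=1$. These $n$ diagonal pairs each contribute $1$.
\item If $e_1\neq e_2$, simplicity means $\M$ has no parallel elements, so $\rk_\M(\{e_1,e_2\})=2$. These $n(n-1)$ ordered pairs each contribute $1/2$.
\end{itemize}

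Adding the two contributions gives
\[
n+\frac{n(n-1)}{2}=\frac{n(n+1)}{2}=\binom{n+1}{2}.
\]
The sign $(-1)^2=1$, so no sign correction is needed.

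There is no real obstacle here. The only point requiring care is that simplicity is used exactly in the off-diagonal case. For a general loopless matroid, pairs of parallel elements would contribute $1$ instead of $1/2$. This suggests a natural generalization: $\binom{n+1}{2}$ plus half the number of ordered pairs of distinct parallel elements.
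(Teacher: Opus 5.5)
Your proposal is correct and follows the paper's own argument: set $m=2$ in the Master Theorem and split the words $(e_1,e_2)$ according to the rank of $\{e_1,e_2\}$. The paper in fact proves the loopless version $\binom{n+1}{2}+c(\M)$, where $c(\M)$ is the number of unordered parallel pairs; this equals your ``half the number of ordered pairs of distinct parallel elements,'' so it is exactly the generalization you anticipated.
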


Recently, Mengesha, Miranda, and Sun \cite{mengesha-miranda-sun} conjectured that the topological zeta function has an interesting behavior under matroid truncations. Our Master Theorem makes it immediate to settle this conjecture affirmatively.

\begin{theorem}[{\cite[Conjecture~1.5]{mengesha-miranda-sun}}]
\label{thm:main-Z}
Let $\M$ be a loopless matroid of positive rank $r$. Then, for every integer
$0\leq j<r$,
\[
    \left. \frac{\partial^j}{\partial s^j} Z^{\top}_\M(s)\right|_{s=0}
    =
    \left. \frac{\partial^j}{\partial s^j} Z^{\top}_{\tr(\M)}(s)\right|_{s=0},
\]
where $\tr(\M)$ denotes the truncation of $\M$.
\end{theorem}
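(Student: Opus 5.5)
The plan is to read everything off the Master Theorem. The $j$-th derivative at $s=0$ equals $j!$ times the coefficient of $s^j$. So it suffices to show that for every $0\le m<r$,
\[
\sum_{\mathbf e\in E^m}\prod_{i=1}^m \frac{1}{\rk_\M(\{e_1,\ldots,e_i\})}
=
\sum_{\mathbf e\in E^m}\prod_{i=1}^m \frac{1}{\rk_{\tr(\M)}(\{e_1,\ldots,e_i\})}.
\]
We first check that the Master Theorem applies to $\tr(\M)$. Since $r>0$, the truncation has the same ground set $E$ and rank $r-1$, and it is loopless whenever $r\ge 2$. If $r=1$, the only case is $j=0$, and both sides equal $1$ by the constant term ($m=0$, empty word, empty product).

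The key step is a termwise comparison. Recall that $\rk_{\tr(\M)}(A)=\min(\rk_\M(A),r-1)$. For a word $\mathbf e\in E^m$ with $m<r$ and any prefix length $i\le m$, we have
\[
\rk_\M(\{e_1,\ldots,e_i\})\le |\{e_1,\ldots,e_i\}|\le i\le m\le r-1.
\]
Hence the minimum is attained by $\rk_\M$, so the two rank functions agree on every prefix of every such word. Consequently each summand, indexed by the same word $\mathbf e\in E^m$, is identical on the two sides, and so are the sums.

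No prefix has rank $0$, since $\M$ is loopless and $i\ge1$, so every term is well-defined on both sides. The only point needing care is the looplessness hypothesis for $\tr(\M)$, which is handled by the case split $r=1$ versus $r\ge2$ above. Otherwise the argument is immediate once the Master Theorem is granted, so I do not expect a genuine obstacle.
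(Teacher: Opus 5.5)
Your proof is correct, but it is not the route the paper takes. The paper never compares the two expansions directly. It proves the stronger Theorem~\ref{thm:main-Y} from the second part of the Master Theorem: no word of length $<r$ spans $E$, and the spanning words of length $r$ are exactly the $r!\,|\mathcal B(\M)|$ orderings of bases. It then obtains Theorem~\ref{thm:main-Z} from the truncation formula of Mengesha, Miranda, and Sun, which expresses $Z^{\top}_{\tr(\M)}(s)-Z^{\top}_\M(s)$ as a rational multiple of $Y^{\top}_\M(s)$, so the order-$r$ vanishing of $Y^{\top}_\M$ passes to the difference.

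Your termwise comparison uses only the first part of the Master Theorem and $\rk_{\tr(\M)}=\min(\rk_\M,r-1)$. It is therefore self-contained and avoids that external formula entirely. The paper's route instead settles both conjectures at once and explains the agreement structurally through $Y^{\top}_\M$.

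Your method also shows at no cost that the range $j<r$ is sharp. For $m=r$, the only words whose terms differ are orderings of bases, with prefix ranks $1,\dots,r$ in $\M$ versus $1,\dots,r-1,r-1$ in $\tr(\M)$. Hence $(-1)^r[s^r]\bigl(Z^{\top}_{\tr(\M)}(s)-Z^{\top}_\M(s)\bigr)=|\mathcal B(\M)|/(r-1)>0$ when $r\ge 2$.

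A minor point: for $r=1$ the truncation is a rank-zero matroid consisting of loops. That lies outside the loopless setting of the definition and of the Master Theorem. So $Z^{\top}_{\tr(\M)}=1$ there comes from the rank-zero convention, not from the empty-word term of the Master Theorem.
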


This statement can also be formulated in terms of the Möbius transform of
the topological zeta function. Mengesha,
Miranda, and Sun proposed a stronger conjecture, which also follows directly from our Master Theorem.

\begin{theorem}[{\cite[Conjecture~1.6]{mengesha-miranda-sun}}]
\label{thm:main-Y}
Let $\M$ be a loopless matroid of rank $r$ with set of bases $\mathcal{B}(\M)$. Then $Y^{\top}_\M(s)$ has a zero of
order $r$ at $s=0$, and
\[
    \lim_{s\to 0}\frac{Y^{\top}_\M(s)}{s^r}
    =
    (-1)^r |\mathcal B(\M)|.
\]
\end{theorem}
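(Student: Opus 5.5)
We take the Master Theorem as given and read $Y^{\top}_\M(s)$ as a power series. The coefficient of $s^m$ is $(-1)^m$ times a sum over words $\mathbf e\in E^m$ whose entries span $E$, that is, with $\cl_\M(\mathbf e)=E$. The plan is to show that no such word exists for $m<r$, and then to count the words of length exactly $r$ together with their weights.

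\textbf{Vanishing of the low coefficients.} If $\cl_\M(\mathbf e)=E$, then $\rk_\M(\{e_1,\ldots,e_m\})=r$. Since a set of at most $m$ elements has rank at most $m$, this forces $m\geq r$. Hence the coefficient of $s^m$ in $Y^{\top}_\M(s)$ vanishes for every $m<r$. This shows that $Y^{\top}_\M(s)$ has a zero of order at least $r$ at $s=0$, and that $\lim_{s\to0}Y^{\top}_\M(s)/s^r$ equals the coefficient of $s^r$, which is
\[
(-1)^r\sum_{\substack{\mathbf e\in E^r\\ \cl_\M(\mathbf e)=E}}\ \prod_{j=1}^r\frac{1}{\rk_\M(\{e_1,\ldots,e_j\})}.
\]

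\textbf{Evaluating the coefficient of $s^r$.} Take a word $\mathbf e$ of length $r$ whose entries span a rank-$r$ set.
\begin{itemize}
\item The entries must be pairwise distinct and independent. Otherwise the set $\{e_1,\ldots,e_r\}$ has fewer than $r$ independent elements and so has rank below $r$.
\item Consequently $\{e_1,\ldots,e_r\}$ is a basis, and $\mathbf e$ is one of its $r!$ orderings.
\item Every prefix $\{e_1,\ldots,e_j\}$ is independent, so $\rk_\M(\{e_1,\ldots,e_j\})=j$ and the product of weights equals $1/r!$.
\end{itemize}
Conversely, every ordering of every basis is such a word. Summing, we obtain $(-1)^r\,|\mathcal B(\M)|\cdot r!\cdot\frac{1}{r!}=(-1)^r|\mathcal B(\M)|$.

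Since $|\mathcal B(\M)|\geq1$, this coefficient is nonzero, so the zero at $s=0$ has order exactly $r$.

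\textbf{Main obstacle.} The only delicate point is analytic. The Master Theorem expresses $Y^{\top}_\M(s)$ as a formal power series, and we need to identify the order of vanishing and the limit of the rational function with the corresponding data of that series. This is legitimate because the rational function is regular at $s=0$. Indeed, by the recursion every denominator is a product of factors of the form $ns+r$ with $r>0$, so none vanishes at $s=0$, and the series is the Taylor expansion of the rational function at the origin. We therefore expect the proof to reduce to this remark plus the counting above.
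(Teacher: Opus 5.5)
Your proposal is correct and follows the paper's proof exactly: the coefficients for $m<r$ vanish because no word of length less than $r$ spans $E$, and at $m=r$ each of the $r!$ orderings of each basis contributes weight $1/r!$. Your extra remark is also correct and fills in a point the paper leaves implicit: $Y^{\top}_\M(s)$ is regular at $s=0$, since every denominator produced by the recursion is a product of factors $ns+r$ with $r>0$, so the Master Theorem series is its Taylor expansion at the origin.
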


Theorem~\ref{thm:main-Y} is stronger than Theorem~\ref{thm:main-Z}, as Mengesha, Miranda, and Sun explain in \cite[Section~5]{mengesha-miranda-sun}. Indeed, the
truncation formula of \cite{mengesha-miranda-sun} expresses
$Z^{\top}_{\tr(\M)}(s)-Z^{\top}_{\M}(s)$ as a rational multiple of $Y^{\top}_\M(s)$;
therefore the vanishing in Theorem~\ref{thm:main-Y} forces the two
Taylor expansions to agree up to order $r-1$.

\smallskip

A conjecture of Kutler, which we learned about in his public address at the Fields Institute in January 2023, see the recorded version in \cite{kutler-video}, predicts bounds for the coefficients of $Z^{\top}_\M(s)$ in terms of those for uniform matroids.

\begin{theorem}[Kutler's bound conjecture \cite{kutler-video}]\label{thm:dominance}
Let $\M$ be a loopless matroid of rank $r$ on $n$ elements. Then, for every $i\geq 0$,
\[
(-1)^i \frac{\partial^i}{\partial s^i}Z^{\top}_\M(s)|_{s=0} \geq (-1)^i \frac{\partial^i}{\partial s^i}Z^{\top}_{\U_{r,n}}(s)|_{s=0}. 
\]
\end{theorem}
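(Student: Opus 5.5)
By the Master Theorem, for each $i\ge 0$,
\[
(-1)^i\frac{\partial^i}{\partial s^i}Z^{\top}_\M(s)\Big|_{s=0} = i!\sum_{\mathbf e\in E^i}\prod_{j=1}^i \frac{1}{\rk_\M(\{e_1,\ldots,e_j\})}.
\]
The analogous identity holds for $\U_{r,n}$ on the same ground set $E$, where $\rk_{\U_{r,n}}(A)=\min(|A|,r)$. The plan is therefore to compare the two sums \emph{termwise} over words $\mathbf e\in E^i$. Since both matroids live on the same set of $n$ elements, both sums range over the same index set $E^i$. It then suffices to show that for every word $\mathbf e$ and every prefix length $j$,
\[
\rk_\M(\{e_1,\ldots,e_j\}) \le \rk_{\U_{r,n}}(\{e_1,\ldots,e_j\}) = \min\bigl(|\{e_1,\ldots,e_j\}|,\,r\bigr).
\]
All ranks appearing here are positive, because $\M$ is loopless and every prefix is nonempty. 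So the inequality can be inverted, giving $1/\rk_\M \ge 1/\rk_{\U_{r,n}}$ factor by factor. All factors are positive, so the products compare in the same direction, and summing over $\mathbf e\in E^i$ gives the claim.

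The key step is the rank inequality. It follows from two basic facts about any matroid of rank $r$. First, $\rk_\M(A)\le |A|$ by subcardinality. Second, $\rk_\M(A)\le \rk_\M(E)=r$ by monotonicity. Hence $\rk_\M(A)\le\min(|A|,r)=\rk_{\U_{r,n}}(A)$ for every $A\subseteq E$. This expresses the fact that the uniform matroid has the largest rank function among rank-$r$ matroids on $n$ elements.

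There is essentially no obstacle beyond setting up the identification correctly. We need to fix a bijection between $E$ and the ground set of $\U_{r,n}$. This is harmless because the Master Theorem formula for $\U_{r,n}$ depends only on the cardinalities of the prefix sets. We also need the case $i=0$, where both sides equal $1$. The only point requiring care is that the Master Theorem is valid as a Taylor expansion at $s=0$, so that the coefficient of $s^i$ equals $\frac{1}{i!}$ times the $i$-th derivative at $0$. This holds because $Z^{\top}_\M$ is a rational function regular at $0$, as the recursive definition shows: each denominator $ns+r$ is nonzero at $s=0$.
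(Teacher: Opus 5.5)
Your proof is correct and is essentially the paper's argument. The paper proves the more general weak-map monotonicity (Theorem~\ref{thm:monotonicity}) by exactly this termwise comparison of the Master Theorem sums, then gets Kutler's bound from the observation that the identity induces a weak map $\U_{r,n}\to\M$, which is precisely your inequality $\rk_\M(A)\le\min(|A|,r)=\rk_{\U_{r,n}}(A)$; your extra checks (the factor $i!$, regularity at $s=0$, positivity of the ranks) are sound details the paper leaves implicit.
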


Furthermore, the Master Theorem makes evident the following much more general property: topological zeta functions are (after a sign normalization) monotone under weak maps.

\begin{theorem}\label{thm:monotonicity}
Let $\varphi: E\to E$ induce a rank-preserving weak map $\M \to \N$ between two matroids on the same ground set $E$. Then, for every $m\geq 0$,
\[
    (-1)^m [s^m]Z^{\top}_\N(s)
    \geq
    (-1)^m [s^m]Z^{\top}_{\M}(s).
\]
\end{theorem}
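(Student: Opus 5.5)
We prove the inequality coefficientwise, directly from the Master Theorem. It gives
\[
(-1)^m [s^m]Z^{\top}_\M(s) = \sum_{\mathbf e\in E^m}\prod_{j=1}^m \frac{1}{\rk_\M(\{e_1,\ldots,e_j\})},
\]
and the analogous formula for $\N$. Every summand is a product of reciprocals of positive integers, because $\M$ and $\N$ are loopless and so every nonempty prefix has positive rank. The plan is therefore to compare the two sums term by term through the map $\varphi$. Recall that $\varphi$ induces a weak map $\M\to\N$ exactly when $\rk_\N(\varphi(A))\le \rk_\M(A)$ for all $A\subseteq E$; we use it in this form, with rank-preservation ensuring the ranks of the ground sets agree.

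Let $\mathbf e=(e_1,\dots,e_m)\in E^m$ and put $\varphi(\mathbf e)=(\varphi(e_1),\dots,\varphi(e_m))$. The weak map property gives
\[
\rk_\N(\{\varphi(e_1),\dots,\varphi(e_j)\}) = \rk_\N(\varphi(\{e_1,\dots,e_j\}))\le \rk_\M(\{e_1,\dots,e_j\})
\]
for every $j$. Since all these ranks are positive, taking reciprocals reverses the inequality, and multiplying over $j$ shows that the $\N$-weight of the word $\varphi(\mathbf e)$ is at least the $\M$-weight of $\mathbf e$. This is the key termwise inequality.

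Summing over $\mathbf e$ requires the map $\mathbf e\mapsto \varphi(\mathbf e)$ on $E^m$ to be injective, or else a correction for multiplicities. The hypothesis that $\varphi:E\to E$ is a map of the ground set to itself inducing the weak map suggests we may take $\varphi$ to be a bijection, which is the standard setting of weak maps between matroids on a common ground set; often $\varphi$ is simply the identity. In that case $\mathbf e\mapsto\varphi(\mathbf e)$ is a bijection of $E^m$, and summing the termwise inequality over all words gives the claim. This is the main point to verify. If $\varphi$ is not assumed bijective, there is a further difficulty: $\N$ loopless means nothing is sent to a loop, yet a non-injective $\varphi$ would make $\N$ look at only part of $E$ while the sum for $\N$ still runs over all of $E^m$. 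In that case we would instead reinterpret the weak map as the identity map between $\M$ and the matroid $\N' $ on $E$ with $\rk_{\N'}(A)=\rk_\N(\varphi(A))$, and compare $\N'$ with $\N$ separately. We expect that the intended reading is the bijective one.

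Rank-preservation is not needed for the inequality itself. It only guarantees that $\N$ is a genuine weakening of $\M$ of the same rank, which is how the result specializes to Theorem~\ref{thm:dominance}: every loopless rank-$r$ matroid on $n$ elements is a weak-map image of $\U_{r,n}$ via the identity.
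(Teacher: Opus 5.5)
Your proposal is correct and is essentially the paper's proof: expand both sides via the Master Theorem, use the weak-map rank inequality (the paper writes it as $\rk_\N(S)\le\rk_\M(\varphi^{-1}(S))$, you as $\rk_\N(\varphi(A))\le\rk_\M(A)$) to compare weights word by word, and sum over $E^m$ using that $\mathbf e\mapsto\varphi(\mathbf e)$ permutes $E^m$. You make that last step explicit where the paper leaves it implicit, and it is legitimate because the paper defines weak maps only for bijections $\varphi$. Your hedge about non-bijective $\varphi$ is therefore unnecessary, and your fallback for that case could not succeed anyway: take $E=\{1,2,3\}$, $\M$ of rank $2$ in which $1$ and $2$ are parallel, $\N=\U_{2,3}$, and $\varphi(1)=\varphi(2)=1$, $\varphi(3)=3$; then $\rk_\N(\varphi(A))\le\rk_\M(A)$ for all $A\subseteq E$, yet $[s^2]Z^{\top}_\N(s)=6<7=[s^2]Z^{\top}_\M(s)$.
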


\subsection*{Acknowledgments}

This project was initiated in January 2023, while we attended the 2022/23 series of seminars ``Matroids -- Combinatorics, Algebra, and Geometry'' organized at the Fields Institute (Toronto). We are thankful to Max Kutler for an inspiring talk \cite{kutler-video} containing several open problems that sparked our interest in this project, to Shiyue Li for encouraging us to write up our results, and to Robert Miranda for sharing with us a preliminary version of \cite{mengesha-miranda-sun} (their manuscript contains results that appear in a 2022 REU report led by Max Kutler). Although many of our results were found in 2023, we only make them available now, as we agreed with Miranda to release our manuscript only after \cite{mengesha-miranda-sun} was made public.

\section{The proof of the Master Theorem}

In this section we prove the Master Theorem. We will assume that the reader is familiar with the standard terminology and notation used in matroid theory. For a complete source, we refer to \cite{oxley,welsh}. Throughout the rest of the article, all matroids are assumed to be loopless. 
We will use the following standard identity.
\begin{lemma}\label{lem:sum_reduced_chi}
    Let $\M$ be a loopless matroid of rank $r > 0$. Then
    \[\sum_{F<E}\overline{\chi}_{\M/F}(1) = r.\]
\end{lemma}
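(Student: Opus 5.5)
We prove the identity by relating it to the characteristic polynomial and differentiating at $t=1$. Recall that $\chi_{\N}(t)=\sum_{F\in\mathcal L(\N)}\mu(\emptyset,F)\,t^{\rk \N-\rk F}$. For a loopless matroid $\N$ of positive rank we have $\chi_{\N}(1)=0$, and $\overline{\chi}_{\N}(t)=\chi_{\N}(t)/(t-1)$. Hence
\[
\overline{\chi}_{\N}(1)=\chi_{\N}'(1)=\sum_{F\in\mathcal L(\N)}\mu(\emptyset,F)\,(\rk\N-\rk F).
\]
We apply this to $\N=\M/F$ for each proper flat $F<E$. Note that $\M/F$ is loopless of rank $r-\rk F>0$, and its lattice of flats is the interval $[F,E]$ of $\mathcal L(\M)$. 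This gives
\[
\sum_{F<E}\overline{\chi}_{\M/F}(1)=\sum_{F<E}\ \sum_{F\le G\le E}\mu(F,G)\,(r-\rk G).
\]

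The key step is to swap the order of summation. The term $G=E$ contributes zero because $r-\rk E=0$, so we may restrict to $G<E$. The inner sum then runs over $F$ with $F\le G$:
\[
\sum_{G<E}(r-\rk G)\sum_{F\le G}\mu(F,G).
\]
By the defining property of the Möbius function, $\sum_{F\in[\widehat 0,G]}\mu(F,G)=\delta_{\widehat 0,G}$. Since $\M$ is loopless, the bottom flat is $\cl(\emptyset)=\emptyset$. So only $G=\emptyset$ survives, contributing $r-0=r$. This proves $\sum_{F<E}\overline{\chi}_{\M/F}(1)=r$.

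The only delicate point is the convention for $\overline{\chi}$ and its sign. Some authors normalize so that $\overline{\chi}_{\N}(1)=(-1)^{\rk\N-1}\beta$-type quantities. So we must check that the definition $\overline{\chi}=\chi/(t-1)$ used in the paper matches, for instance on $\U_{1,1}$: there $\chi=t-1$, so $\overline{\chi}(1)=1$, and the identity for rank $1$ reads $1=r$.

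As a sanity check on $\U_{2,2}$: $\overline{\chi}_{\M/\emptyset}(1)=\chi'(1)=2(1)-2=0$ from $\chi=(t-1)^2$, and each of the two atoms contributes $1$, for a total of $2=r$.

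A fallback if one prefers to avoid derivatives is induction on $r$ using the recursion $\chi_{\M}(t)=\sum_F \mu(\emptyset,F)t^{r-\rk F}$. The derivative argument above is, however, direct.
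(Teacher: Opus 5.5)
Your proof is correct, but it takes a different route from the paper: the paper gives no direct argument and simply specializes a polynomial identity (Lemma~2.5 of Ferroni--Matherne--Stevens--Vecchi) at $x=1$. Your argument is a self-contained substitute. Its engine is the same M\"obius swap that proves the natural polynomial version. Summing $\chi_{\M/F}(t)=\sum_{G\geq F}\mu(F,G)\,t^{r-\rk G}$ over all flats $F$ and exchanging the sums gives
\[
\sum_{F\in\mathcal L(\M)}\chi_{\M/F}(t)=t^{r}.
\]
Removing the term $F=E$ (which equals $1$) and dividing by $t-1$ yields $\sum_{F<E}\overline{\chi}_{\M/F}(t)=1+t+\cdots+t^{r-1}$, whose value at $t=1$ is $r$. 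Your step $\overline{\chi}_{\M/F}(1)=\chi_{\M/F}'(1)$ amounts to differentiating the displayed identity at $t=1$; the term $F=E$ has zero derivative.

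The polynomial route avoids derivatives and gives a $t$-refinement for free. Your route proves exactly what the paper needs, using only three facts: $\chi_{\N}(1)=0$ for loopless $\N$ of positive rank; the identification $\mathcal L(\M/F)\cong[F,E]$ with ranks shifted by $\rk F$; and $\sum_{F\leq G}\mu(F,G)=\delta_{\varnothing,G}$. This makes the lemma independent of the citation.
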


\begin{proof}
    This result is obtained by specializing \cite[Lemma~2.5]{ferroni-matherne-stevens-vecchi} to $x=1$.
\end{proof}
Let $\M$ be a loopless matroid on a ground set $E$. For an integer $m\geq 0$ and a word $\mathbf e=(e_1,\ldots,e_m)\in E^m$
write
\[
    F_j(\mathbf e)=\cl_\M\{e_1,\ldots,e_j\},
\]
where $\cl_\M$ denotes the closure operator of $\M$.
For the empty word we use the convention
${\cl_\M(\varnothing)=\varnothing}$. We will be interested in the sequence of integers $\rk_\M F_j(\mathbf e)$, and we write $\rk_\M(\mathbf e)$ for $\rk_\M F_m(\mathbf e)$. 

\newtheorem*{thm:intro1}{\hyperlink{thm:main-coeffs-Z-and-Y}{Master Theorem} (first part)}
\begin{thm:intro1}
    \itshape Let $\M$ be a loopless matroid on a ground set $E$  with rank function $\rk_\M$. Then its topological zeta function can be computed as follows:
\[
    Z^{\top}_\M(s)
    =
    \sum_{m\geq 0}(-1)^m \Bigg{(}\sum_{\substack{\mathbf e\in E^m}}
    \prod_{j=1}^m \frac{1}{\rk_\M(\{e_1,\ldots,e_j\})}\Bigg{)}\,s^m,\]
where, for each $1\leq i\leq m$, $e_i$ denotes the $i$-th entry of the word $\mathbf{e}\in E^m$.
\end{thm:intro1}

\begin{proof}
We set 
\[
a_m = \sum_{\mathbf e \in E^m}\prod_{j=1}^m \frac{1}{\rk_\M F_j(\mathbf e)}
\]
and prove that
\[
[s^m]Z^{\top}_{\M}(-s) = a_m
\]
by double induction on $m$ and the rank of $\M$. If $\rk(\M) = 0$ or $m=0$, the statement holds. Now consider a matroid of rank $r > 0$ and a coefficient $m > 0$ and assume that the statement holds for all coefficients of all matroids of smaller rank and for all coefficients $a_j$, with $j < m$ for $\M$. 
By the defining recursion, if $\M$ is a matroid of rank $r$ on $n$ elements, then
\begin{equation}\label{eq:recursion Z}
(r - ns)Z^{\top}_\M(-s) = \sum_{F < E}Z^{\top}_{\M|F}(-s) \overline{\chi}_{\M/F}(1).
\end{equation}
We extract the coefficient of $s^m$ on both sides. By induction, since every flat appearing in the sum has rank smaller than $r$, the coefficient of $s^m$ in the right-hand side is 
\[
\sum_{F < E} a_m(\M|F)\overline{\chi}_{\M/F}(1) = \sum_{F < E}\sum_{\mathbf e \in F^m}\prod_{j=1}^m \frac{1}{\rk_{\M|F} F_j(\mathbf e)} \overline{\chi}_{\M/F}(1).
\]
Since for every $S \subseteq F$ we have $\rk_{\M}(S) = \rk_{\M|F}(S)$, the sum can be rewritten as 

\[
\sum_{\mathbf e \in E^m}\prod_{j=1}^m \frac{1}{\rk_{\M} F_j(\mathbf e)}\sum_{F_m(\mathbf e) \leq F < E}\overline{\chi}_{\M/F}(1) = \sum_{\mathbf e \in E^m}\prod_{j=1}^m \frac{1}{\rk_{\M} F_j(\mathbf e)} (r - \rk_\M(\mathbf e)),
\]
where the last equality follows by applying Lemma \ref{lem:sum_reduced_chi}. By \eqref{eq:recursion Z}, we obtain
\begin{equation}\label{eq:recursion2 Z}
r \ [s^m]Z^{\top}_\M(-s) = n\ [s^{m-1}]Z^{\top}_\M(-s) + \sum_{\mathbf e \in E^m} \prod_{j=1}^m \frac{1}{\rk_{\M} F_j(\mathbf e)} (r - \rk_\M(\mathbf e)).
\end{equation}
By induction on $m$, $[s^{m-1}]Z^{\top}_\M(-s) = a_{m-1}$. Moreover, we can expand
\[
na_{m-1} = \sum_{e_m \in E}\sum_{\mathbf{e'} \in E^{m-1}}\prod_{j=1}^{m-1} \frac{1}{\rk_\M F_j(\mathbf{e'})} = \sum_{\mathbf e \in E^m} \rk_\M(\mathbf e) \prod_{j=1}^m \frac{1}{\rk_{\M} F_j(\mathbf e)}.
\]
Substituting this into \eqref{eq:recursion2 Z} gives
\begin{align*}
r \ [s^m]Z^{\top}_\M(-s) &= \sum_{\mathbf e \in E^m} \rk_\M(\mathbf e) \prod_{j=1}^m \frac{1}{\rk_{\M} F_j(\mathbf e)} + \sum_{\mathbf e \in E^m} \prod_{j=1}^m \frac{1}{\rk_{\M} F_j(\mathbf e)} (r - \rk_\M(\mathbf e))\\
&= r \ \sum_{\mathbf e \in E^m} \prod_{j=1}^m \frac{1}{\rk_{\M} F_j(\mathbf e)}.
\end{align*}
Hence $[s^m] Z^{\top}_\M(-s) = a_m$, from which the statement follows.
\end{proof}

\newtheorem*{thm:intro2}{{\hyperlink{thm:main-coeffs-Z-and-Y}{Master Theorem} (second part)}}
\begin{thm:intro2}
    \itshape The M\"obius transform of the topological zeta function is
\[
    Y^{\top}_\M(s)
    =
    \sum_{m\geq 0}(-1)^m
    \Bigg{(}\sum_{\substack{\mathbf e\in E^m\\ \cl_\M(\mathbf e)=E}}
    \prod_{j=1}^m \frac{1}{\rk(\{e_1,\ldots,e_j\})}\Bigg{)}\, s^m.\]
\end{thm:intro2}
\begin{proof}
Since $Y^{\top} = Z^{\top} * \mu$,
\[
    Y^{\top}_{\M}(s) = \sum_{F\leq E}\mu(F,E) \sum_{m\geq 0}(-s)^m \sum_{\mathbf e\in F^m} \prod_{j=1}^m \frac{1}{\rk_{\M} F_j(\mathbf e)},
\]
where $\rk_{\M} F_j(\mathbf e) = \rk_{\M|F} F_j(\mathbf e)$ since the letters in $\mathbf e$ form a subset of $F$. Interchanging the sums, we get
\[
    Y^{\top}_{\M}(s) = \sum_{m\geq 0}(-s)^m  \sum_{\mathbf e\in E^m}  \prod_{j=1}^m \frac{1}{\rk_{\M} F_j(\mathbf e)} \sum_{F_m(\mathbf e) \leq F}\mu(F,E).
\]
The innermost sum is taken over the interval $[F_m(\mathbf e), E]$, and is thus equal to $1$ when $F_m(\mathbf e) = E$ and $0$ otherwise. The sum then reduces to the desired statement. 
\end{proof}

\section{Applications of the Master Theorem}

\subsection{Quadratic coefficients}
As advertised in the introduction, the following result solves a conjecture by van der Veer and in fact extends it to all loopless matroid, not necessarily simple.

\newtheorem*{thm:intro3}{Theorem~\ref{thm:coeff_s2_Z}}
\begin{thm:intro3}    Let $\M$ be a loopless matroid on $n$ elements. Then
    \[
    [s^2]Z^{\top}_{\M}(s) = \binom{n+1}{2} + c(\M),
    \]
    where $c(\M)$ is the number of circuits of size $2$ of $\M$, i.e., the number of unordered pairs of elements that are parallel to each other.
\end{thm:intro3}

\begin{proof}
Using the Master Theorem, we write
\[
[s^2]Z^{\top}_{\M}(s) = \sum_{(e_1,e_2)\in E^2}\frac{1}{\rk_\M(\{e_1\})}\frac{1}{\rk_\M(\{e_1,e_2\})}.
\]
There are $2 c(\M)$ words $(e_1,e_2)$ with $e_1\neq e_2$ and $e_1$ parallel to $e_2$, $n$ words with $e_1 = e_2$ and $2\left[\binom{n}{2} - c(\M) \right]$ words for which $\rk_\M(\{e_1,e_2\}) = 2$. In total,
\[
[s^2]Z^{\top}_{\M}(s) = 2c(\M) \cdot 1 \ +\ n \cdot 1 \ + \ 2\left[\binom{n}{2} - c(\M) \right]\cdot \frac{1}{2} = \binom{n+1}{2} + c(\M).\qedhere
\]
\end{proof}

\subsection{The conjecture on truncations}

The following solves the stronger (and thus also the weaker) conjecture posed by Mengesha, Miranda, and Sun \cite{mengesha-miranda-sun}, thereby establishing Theorem~\ref{thm:main-Z} and Theorem~\ref{thm:main-Y}.

\newtheorem*{thm:intro4}{Theorem~\ref{thm:main-Y}}
\begin{thm:intro4}
Let $\M$ be a loopless matroid of rank $r$. Then $Y^{\top}_\M(s)$ has a zero of
order $r$ at $s=0$, and
\[
    \lim_{s\to 0}\frac{Y^{\top}_\M(s)}{s^r}
    =
    (-1)^r |\mathcal B(\M)|.
\]
\end{thm:intro4}
\begin{proof}
The identity $[s^j]Y^{\top}_{\M} = 0$ for $j < r$ is clear directly from the Master Theorem, since no subset of size less than $r$ can span the whole ground set. If $m=r$, then the only contribution comes from words that correspond to ordered bases, and, each such word has weight is $\frac{1}{r!}$. Since the elements of a basis can be reordered in $r!$ ways, in the end we get a total contribution of $1$ from every basis of $\M$, hence the result. 
\end{proof}

\subsection{Kutler's bound conjecture and monotonicity}

We say that a bijection $\varphi:E\to E$ induces a \emph{weak map} $\M \to \N$ between two matroids on $E$ if, for every basis $B \in \mathcal B (\N)$, the set $\varphi^{-1}(B) = \{\varphi^{-1}(e) \mid e \in B \}$ is a basis of $\M$.

\newtheorem*{thm:intro5}{Theorem~\ref{thm:monotonicity}}
\begin{thm:intro5}
Let $\varphi: E\to E$ induce a rank-preserving weak map $\M\to\N$ between two matroids  on the same ground set $E$. Then, for every $m\geq 0$,
\[
 (-1)^m [s^m]Z^{\top}_\N(s)
 \geq
 (-1)^m [s^m]Z^{\top}_{\M}(s).
\]
\end{thm:intro5}

\begin{proof}
By the Master Theorem we write
\[
 (-1)^m [s^m]Z^{\top}_\M(s)
 =
 \sum_{(e_1,\ldots,e_m)\in E^m}
 \prod_{j=1}^m
 \frac{1}{\rk_\M\{e_1,\ldots,e_j\}}
\]
and similarly for $\N$. Since $\varphi$ induces a weak map $\M\to\N$, for every subset $S\subseteq E$ we have
\[
 \rk_\N(S)\leq \rk_\M(\varphi^{-1}(S)).
\]
Therefore, for every word $\mathbf e=(e_1,\ldots,e_m)\in E^m$ and every $j$,
\[
 \rk_\N\{e_1,\ldots,e_j\}
 \leq
 \rk_{\M}\{\varphi^{-1}( e_1),\ldots,\varphi^{-1}( e_j) \}.
\]
Taking reciprocals gives
\[
 \prod_{j=1}^m
 \frac{1}{\rk_\N\{e_1,\ldots,e_j\}}
 \geq
 \prod_{j=1}^m
 \frac{1}{\rk_{\M}\{\varphi^{-1}(e_1),\ldots,\varphi^{-1}(e_j)\}}.
\]
Summing over all words in $E^m$ proves the coefficientwise inequality.
\end{proof}

This proves Kutler's conjecture (i.e., Theorem~\ref{thm:dominance}) after noting that the identity map $\iota:E\to E$ induces a weak map $\U_{r,n}\to \M$ from the uniform matroid $\U_{r,n}$ to every matroid $\M$ of rank $r$ on $n$ elements.

\section{Final remarks}

\subsection{A matrix interpretation}

Let $\M$ be a loopless matroid. Define $A(\M) \in \mathbb Q^{|\mathcal L(\M)| \times |\mathcal{L}(\M)|}$ to be the matrix with entries
\[
A(\M)_{F,G} = \frac{|\left\{e \in E | \cl_\M(F\cup \{e\}) = G\right\}|}{\rk_\M(G)},
\]
where $F$ and $G$ are flats of $\M$ and $G\neq\varnothing$. If $G=\varnothing$ we define $A(\M)_{F,\varnothing}=0$. Note that
\[A(\M)_{F,F} = \frac{|F|}{\rk_{\M}(F)} \text{ for $F\neq\varnothing$}\qquad \text{and} \qquad A(\M)_{F,G} = \frac{|G\setminus F|}{\rk_{\M}(G)} \enspace \text{if $F$ is covered by $G$}.\] In all other cases the corresponding entry is zero.

\begin{theorem}\label{thm:matrix Z and Y}
Let $\M$ be a loopless matroid on the ground set $E$ and let $A = A(\M)$ be defined as above. Then
\[
(-1)^m[s^m]Z^{\top}_\M(s) = e_\varnothing ^{T} A^m \mathbf{1} \quad \text{and} \quad (-1)^m[s^m]Y^{\top}_\M(s) = e_\varnothing ^{T} A^m e_E,
\]
where $e_F$ is the vector with $1$ in the entry corresponding to $F$ and $0$ otherwise and $\mathbf 1$ is the vector of all ones. 
\end{theorem}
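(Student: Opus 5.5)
The plan is to read $A$ as a weighted transition matrix on the lattice of flats, so that $A^m$ performs a sum over words of length $m$ grouped by their chains of closures. With the convention $F_0(\mathbf e)=\varnothing$ and $F_j(\mathbf e)=\cl_\M\{e_1,\ldots,e_j\}$, we have $F_j(\mathbf e)=\cl_\M(F_{j-1}(\mathbf e)\cup\{e_j\})$. Expanding the matrix product gives
\[
e_\varnothing^T A^m e_G=\sum_{\varnothing=G_0,G_1,\ldots,G_m=G}\ \prod_{j=1}^m A_{G_{j-1},G_j},
\]
where the sum runs over all sequences of flats $G_1,\ldots,G_{m-1}$.

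Next we substitute the definition of $A$. The numerator $|\{e\in E \mid \cl_\M(G_{j-1}\cup\{e\})=G_j\}|$ counts the letters $e_j$ that move the current closure from $G_{j-1}$ to $G_j$. Any flat in the sequence reached after at least one step is nonempty, because $\M$ is loopless and $\cl_\M(F\cup\{e\})\ni e$. So the convention $A_{F,\varnothing}=0$ only kills sequences that return to $\varnothing$, and no word produces such a sequence. The product of numerators therefore equals the number of words $\mathbf e\in E^m$ with $F_j(\mathbf e)=G_j$ for all $j$. Each of these words contributes
\[
\prod_j \frac{1}{\rk_\M(G_j)}=\prod_j \frac{1}{\rk_\M\{e_1,\ldots,e_j\}},
\]
since rank is invariant under closure. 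Summing over sequences is then the same as summing over words, partitioned by their chains of closures:
\[
e_\varnothing^T A^m e_G=\sum_{\mathbf e\in E^m,\ \cl_\M(\mathbf e)=G}\ \prod_{j=1}^m\frac{1}{\rk_\M\{e_1,\ldots,e_j\}}.
\]

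Taking $G=E$ and comparing with the second part of the Master Theorem gives the formula for $Y^{\top}_\M$. Summing over all $G$, which is pairing with $\mathbf 1$, removes the constraint on the final closure. Comparing with the first part of the Master Theorem then gives the formula for $Z^{\top}_\M$. The case $m=0$ holds trivially: both sides equal $1$, and for $Y^{\top}_\M$ we use $e_\varnothing^Te_E=[E=\varnothing]$, which agrees with the rank-zero case.

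The main obstacle is purely bookkeeping. We must check that the bijection between words and weighted paths is exact, which amounts to verifying that the closure update is deterministic and that the $\varnothing$ column convention loses nothing. Both points follow from looplessness as noted above.
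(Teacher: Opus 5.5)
Your proposal is correct and follows essentially the same route as the paper: expand $(A^m)_{\varnothing,G}$ as a sum over sequences of flats, identify the numerators of $A$ with the number of letters realizing each closure step so that sequences correspond to words grouped by their closure chains, then take $G=E$ or sum over all $G$ and compare with the Master Theorem. Your extra checks on the $\varnothing$-column convention and on $m=0$ are harmless; the $m=0$ case is already covered by your general identity, since $A^0=\mathbf I$.
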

\begin{proof}
A given word $\mathbf e$ determines a weakly increasing chain of flats
\[
    \varnothing = F_0(\mathbf e) \leq F_1(\mathbf e) \leq \cdots \leq F_m(\mathbf e)
\]
with corresponding weight
\[
    \prod_{j=1}^m \frac{1}{\rk_{\M} F_j(\mathbf e)}.
\]
On the other hand, the entry $(A^m)_{\varnothing, F}$ corresponds to 
\[
    (A^m)_{\varnothing, F} = \sum_{\varnothing = F_0 \leq F_1 \leq \cdots \leq F_m = F}\prod_{i=1}^m A_{F_{i-1},F_i} = \sum_{\substack{\mathbf e \in E^m \\ F_m(\mathbf e) = F}}\prod_{j=1}^m \frac{1}{\rk_{\M} F_j(\mathbf e)}.
\]
In particular,
\[
    e_\varnothing^{T} A^m \mathbf 1 = \sum_{F \in \mathcal L (\M)} (A^m)_{\varnothing, F} = \sum_{\substack{\mathbf e \in E^m}}\prod_{j=1}^m \frac{1}{\rk_{\M} F_j(\mathbf e)} = (-1)^m[s^m]Z^{\top}_\M(s)
\]
and
\[
    e_\varnothing^{T} A^m \mathbf e_E = (A^m)_{\varnothing, E} = \sum_{\substack{\mathbf e \in E^m \\ F_m(\mathbf e) = E}}\prod_{j=1}^m \frac{1}{\rk_{\M} F_j(\mathbf e)} = (-1)^m[s^m]Y^{\top}_\M(s).\qedhere
\]
\end{proof}
\begin{corollary}\label{cor:matrix-form-compact}
    Let $\M$ be a loopless matroid on the ground set $E$ and let $A = A(\M)$ be defined as above. Then
    \[
        Z^{\top}_\M(s) = e_\varnothing^{T} (\mathbf I + sA)^{-1} \mathbf 1 \quad \text{and} \quad Y^{\top}_\M(s) = e_\varnothing^{T} (\mathbf I + sA)^{-1} e_E.
    \]
\end{corollary}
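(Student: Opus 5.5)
The plan is to sum the coefficient identities of Theorem~\ref{thm:matrix Z and Y} into a geometric series in $A$. For every $m\geq 0$,
\[
[s^m]Z^{\top}_\M(s) = (-1)^m e_\varnothing^{T} A^m \mathbf 1 = e_\varnothing^{T}(-A)^m\mathbf 1,
\]
and likewise with $e_E$ in place of $\mathbf 1$. Formally, as power series in $s$ with matrix coefficients, we then have
\[
\sum_{m\geq 0}(-sA)^m = (\mathbf I+sA)^{-1},
\]
so $Z^{\top}_\M(s)$ and $e_\varnothing^{T}(\mathbf I+sA)^{-1}\mathbf 1$ have the same Taylor expansion at $s=0$. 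The same holds for $Y^{\top}_\M(s)$ and $e_\varnothing^{T}(\mathbf I+sA)^{-1}e_E$.

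The only point that needs care is turning agreement of power series into equality in $\mathbb Q(s)$. Both sides are rational functions regular at $s=0$:
\begin{itemize}
\item $Z^{\top}_\M$ and $Y^{\top}_\M$ are rational by definition, and the Master Theorem shows their expansions at $0$ are honest power series.
\item $(\mathbf I+sA)^{-1}$ has entries that are rational in $s$, by Cramer's rule, since $\det(\mathbf I+sA)$ is a polynomial with constant term $1$.
\end{itemize}
Two rational functions regular at $0$ with identical Taylor expansions coincide, because the map $\mathbb Q(s)\cap\mathbb Q[[s]]\to\mathbb Q[[s]]$ is injective. Equivalently, multiply both sides by the polynomial $\det(\mathbf I+sA)$ and compare polynomials.

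Alternatively, one can see the rationality directly. Order the flats by a linear extension of the lattice order. Then $A$ is upper triangular, since $A_{F,G}\neq 0$ forces $F\leq G$. So $\det(\mathbf I+sA)=\prod_{F\neq\varnothing}(1+s|F|/\rk F)$, and the denominators are visibly of the expected form $|F|s+\rk F$. This is consistent with the recursive definition, but it is not needed for the proof.

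I expect no real obstacle here. The only subtlety is the formal-versus-rational identification, and injectivity of the embedding into power series handles it.
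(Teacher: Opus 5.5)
Your proposal is correct and follows the paper's proof: both sum the coefficient identities of the preceding theorem using the expansion $(\mathbf I+sA)^{-1}=\sum_{m\geq 0}(-sA)^m$. You also argue that both sides are rational functions regular at $s=0$, so equal Taylor expansions force equality in $\mathbb Q(s)$; this makes explicit a step the paper leaves implicit.
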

\begin{proof}
    This follows from Theorem \ref{thm:matrix Z and Y} and the expansion of $(\mathbf I + sA)^{-1}$ as
    \[
    (\mathbf I + sA)^{-1} = \mathbf I - sA + s^2 A^2  - s^3 A^3 + \cdots = \sum_{m\geq 0} (-1)^m s^m A^m.\qedhere 
    \]
\end{proof}

The matrix interpretation also lets us recover the set of candidate poles from \cite{vanderVeer}.
\begin{corollary}\label{cor:candidate poles}
    The poles of $Z^{\top}_\M(s)$ and $Y^{\top}_\M(s)$ are a subset of 
    \[
    \left\{ -\frac{\rk_\M(F)}{|F|} \mid F \in \mathcal L (\M) \setminus \{\varnothing \} \right\}.
    \]
\end{corollary}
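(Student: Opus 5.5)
We would derive this directly from Corollary~\ref{cor:matrix-form-compact}, which expresses both functions as entries of the resolvent:
\[
Z^{\top}_\M(s) = e_\varnothing^{T}(\mathbf I+sA)^{-1}\mathbf 1, \qquad Y^{\top}_\M(s) = e_\varnothing^{T}(\mathbf I+sA)^{-1}e_E .
\]
Both identities hold first as formal power series. Each entry of $(\mathbf I+sA)^{-1}$ is, by Cramer's rule, a rational function of the form $p(s)/\det(\mathbf I+sA)$ with $p$ a polynomial. Since power series expansion at $s=0$ is injective on rational functions regular at $0$, the identity holds as rational functions. Therefore every pole of $Z^{\top}_\M$ or $Y^{\top}_\M$ is a zero of $\det(\mathbf I+sA)$.

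The key step is computing this determinant using the structure of $A$. We order the flats by a linear extension of the lattice order, with $\varnothing$ first. By definition, $A_{F,G}\neq 0$ only if $G=\cl_\M(F\cup\{e\})\geq F$. Hence $A$ is upper triangular in this ordering, and so is $\mathbf I+sA$. Its diagonal entries are $1$ for $F=\varnothing$ (since $A_{\varnothing,\varnothing}=0$) and $1+s\,|F|/\rk_\M(F)$ for $F\neq\varnothing$. Therefore
\[
\det(\mathbf I+sA)=\prod_{F\in\mathcal L(\M)\setminus\{\varnothing\}}\Bigl(1+s\frac{|F|}{\rk_\M(F)}\Bigr),
\]
whose zeros are exactly the values $-\rk_\M(F)/|F|$ for nonempty flats $F$. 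For a loopless matroid, every nonempty flat has positive rank, so no division by zero occurs.

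The only point needing care is the passage from the formal power series identity to an identity of rational functions, which we handle with the Cramer's rule argument above. The triangularity itself is immediate from $F\subseteq\cl_\M(F\cup\{e\})$. Poles can only become fewer after cancellation with the numerator, which is why the statement asserts containment rather than equality.
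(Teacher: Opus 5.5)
Your proof is correct and follows the paper's argument: both rely on Corollary~\ref{cor:matrix-form-compact}, the upper-triangularity of $A$ with respect to a linear extension of the lattice of flats, and the diagonal entries $A_{F,F}=|F|/\rk_\M(F)$ to identify the zeros of $\det(\mathbf I+sA)$. Your Cramer's-rule remark, which justifies passing from the formal power series identity to an identity of rational functions, is a welcome detail that the paper leaves implicit.
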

\begin{proof}
    For a nonempty flat $F$,
    \[
    A_{F,F} = \frac{|\left\{e \in E \mid \cl_\M(F\cup \{e\}) = F \right\}|}{\rk_\M(F)} = \frac{|F|}{\rk_\M(F)},
    \]
    where the second equality follows from $F$ being a flat. Thus, by Corollary \ref{cor:matrix-form-compact}, the set of candidate poles corresponds exactly to the zeros of $\det(\mathbf I + sA)$, which is the set in the statement since $A$ is upper-triangular (after ordering the flats by inclusion).
\end{proof}

\subsection{A formula with saturated chains}
The Master Theorem gives a concrete formula for the coefficients of $Z^{\top}_\M(s)$, but it somehow hides its poles. In this subsection we rewrite that identity in terms of saturated chains in order to make the candidate poles explicit again. 
Let $\Gamma(\M)$ denote the set of all saturated chains starting at $\varnothing$ and allowed to end at an arbitrary flat:
\[
    \gamma: \varnothing=F_0\lessdot F_1\lessdot\cdots\lessdot F_\ell .
\]
For such a chain we write $\ell(\gamma) = \ell$ for its length.
We also include the chain of length $0$, consisting only of $\varnothing$.

\begin{proposition}\label{prop:chain-expansion-Z}
Let $\M$ be a loopless matroid. Then
\[
    Z^{\top}_{\M}(s)
    = \sum_{\substack{\gamma\in\Gamma(\M) }} (-s)^{\ell(\gamma)} \prod_{i=1}^{\ell(\gamma)} \frac{|F_i \setminus F_{i-1}|}{\left(\rk_\M (F_i) +|F_i|s\right)}  .
\]
\end{proposition}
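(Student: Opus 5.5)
Starting from the Master Theorem, we group words $\mathbf e\in E^m$ according to the strictly increasing chain of distinct flats in $\varnothing=F_0(\mathbf e)\le F_1(\mathbf e)\le\cdots\le F_m(\mathbf e)$. Adding one letter to a word moves the current flat $F$ either to itself (if $e\in F$) or to $\cl_\M(F\cup\{e\})$, which covers $F$. So the distinct flats visited form a saturated chain $\gamma:\varnothing=F_0\lessdot F_1\lessdot\cdots\lessdot F_\ell$ in $\Gamma(\M)$. Conversely, a word with underlying chain $\gamma$ is described by two pieces of data. First, for each step $i\ge1$ we choose one letter from the $|F_i\setminus F_{i-1}|$ elements $e$ with $\cl_\M(F_{i-1}\cup\{e\})=F_i$ (these are exactly the elements of $F_i\setminus F_{i-1}$). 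Second, after entering $F_i$ we insert some number $k_i\ge0$ of letters lying in $F_i$, each with $|F_i|$ choices. Before the first jump the flat is $\varnothing$, which contains no letters, so there are no stalling letters at $F_0$; this also matches the fact that $\rk$ is never $0$ after the first letter.

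Each letter written while the current flat is $F_i$, whether the jump letter into $F_i$ or a stalling letter in $F_i$, contributes the factor $1/\rk_\M(F_i)$. Summing over $m$ with sign $(-s)^m$, where $m=\ell+\sum_i k_i$, the contribution of $\gamma$ is therefore
\[
\prod_{i=1}^{\ell}\frac{-s\,|F_i\setminus F_{i-1}|}{\rk_\M(F_i)}\sum_{k_i\ge0}\left(\frac{-s|F_i|}{\rk_\M(F_i)}\right)^{k_i}
=\prod_{i=1}^{\ell}\frac{-s\,|F_i\setminus F_{i-1}|}{\rk_\M(F_i)+|F_i|s}.
\]
Summing over $\gamma\in\Gamma(\M)$ gives the claimed formula, with the empty chain contributing $1$.

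To make the interchange rigorous, we would compare coefficients of $s^m$ on both sides as formal power series. Each side is a finite sum of rational functions, because $\Gamma(\M)$ is finite. Its expansion at $s=0$ has $s^m$-coefficient equal to the weighted count of words of length $m$ described above, so we are only verifying a bijection of finite sets coefficient by coefficient.

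The main point requiring care is the bijection itself. We must check that the word decomposes uniquely as (jump letter, stalling block) at each level, with no stalling at $\varnothing$, and that the rank factors are assigned correctly. Equivalently, this is the geometric series expansion of the matrix formula in Corollary~\ref{cor:matrix-form-compact}: writing $A=D+N$ with $D$ diagonal and $N$ supported on covers, we use $(\mathbf I+sA)^{-1}=\sum_\ell \bigl((\mathbf I+sD)^{-1}(-sN)\bigr)^\ell(\mathbf I+sD)^{-1}$. We could present the proof this way as an alternative. In that version one must note that $A_{\varnothing,\varnothing}=0$, and that each factor $(\mathbf I+sD)^{-1}$ at $F_i$ combines with the $1/\rk_\M(F_i)$ in the entry $N_{F_{i-1},F_i}$ to give $1/(\rk_\M(F_i)+|F_i|s)$.
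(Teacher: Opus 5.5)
Your proposal is correct and is essentially the paper's own proof. You group words by the saturated chain of distinct flats they visit, observe that the jump letters into $F_i$ are exactly the elements of $F_i\setminus F_{i-1}$ (each weighted by $1/\rk_\M(F_i)$), and sum the stalling letters from $F_i$ as a geometric series. Your extra remarks are valid additions the paper does not spell out: the rearrangement is justified by comparing Taylor coefficients at $s=0$ of rational functions regular there, and the splitting $A=D+N$ in Corollary~\ref{cor:matrix-form-compact} (using $A_{\varnothing,\varnothing}=0$) gives an equivalent matrix derivation.
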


\begin{proof}
We manipulate the expression given by the Master Theorem. 

A word ${\mathbf e=(e_1,\ldots,e_m)}$ determines a weakly increasing sequence of
flats
\[
    \varnothing=G_0\leq G_1\leq\cdots\leq G_m,
    \qquad
    G_j=\cl_\M\{e_1,\ldots,e_j\}.
\]
After deleting consecutive repetitions one obtains a saturated chain
\[
    \varnothing=F_0\lessdot F_1\lessdot\cdots\lessdot F_\ell .
\]
For each fixed saturated chain and for a strict jump $F_{i-1}\lessdot F_i$, the letters $e$ satisfying
\[
    \cl_\M(F_{i-1}\cup\{e\})=F_i
\]
are precisely the elements of $F_i\setminus F_{i-1}$.  Hence the total
contribution of this jump is
\[
    (-s)\frac{|F_i\setminus F_{i-1}|}{\rk_\M(F_i)}.
\]
Once the word has reached the flat $F_i$, any number of letters from $F_i$ may be
inserted without changing the current flat.  These repetitions contribute the
geometric series
\[
    \sum_{a\geq 0}
    \left(
        (-s)\frac{|F_i|}{\rk_\M(F_i)}
    \right)^a
    =
    \frac{1}{1+\frac{|F_i|}{\rk_\M F_i}s}.
\]
Multiplying the jump contributions and the repetition contributions over
$i=1,\ldots,\ell$, and then summing over all saturated chains $\gamma$, gives the
claimed formula.
\end{proof}

\begin{remark}
Van der Veer's construction \cite{vanderVeer} naturally produces the set of candidate poles for $Z^{\top}_\M(s)$ presented in Corollary \ref{cor:candidate poles}. Determining which of these candidates are actual poles is a subtler problem, since cancellations may occur in the final rational function. In the algebro-geometric setting, the Denef--Loeser topological zeta function is constructed from a log resolution, and the exceptional divisors of the resolution give candidate poles. The monodromy conjecture, proved in \cite{budur-mustata-teitler} for central hyperplane arrangements, predicts that the poles of the topological zeta function correspond to eigenvalues of the monodromy action on the cohomology of the Milnor fibres of the underlying hypersurface. Thus, in the classical setting, the passage from candidate poles to actual poles is tied to deep geometric information. In the hyperplane arrangement case, all candidate poles are proven to be eigenvalues, so the proof gives no information on which of them are actual poles. 
Van der Veer also states that it is a folklore conjecture that the eigenvalues of the monodromy at the origin of a central hyperplane arrangement are combinatorial invariants, but there is no conjectured formula.
It would be interesting to investigate whether our matrix interpretation can give a concrete solution to this cancellation problem, by checking the generalized eigenspaces of $A(\M)$.
\end{remark}

\bibliographystyle{amsalpha}
\bibliography{bibliography}
\end{document}